\documentclass[12pt,twoside,a4paper]{amsart}

\usepackage[margin=2.5cm]{geometry}

\usepackage{amssymb,mathtools}
\usepackage{xcolor}
\usepackage{hyperref}

\newtheorem{thm}{Theorem}[section]
\newtheorem{lem}[thm]{Lemma}
\newtheorem{prop}[thm]{Proposition}
\newtheorem{cor}[thm]{Corollary}
\theoremstyle{definition}
\newtheorem{defn}[thm]{Definition}
\theoremstyle{remark}

\numberwithin{equation}{section}
\allowdisplaybreaks

\newcommand{\R}{\mathbb{R}}

\newcommand{\HD}{\dim_H}
\newcommand{\Hau}{\mathcal{H}}
\newcommand{\IP}{\operatorname{IP}}

\begin{document}
\keywords{Hausdorff measure, projection theory, self-similar sets}
\subjclass[2020]{Primary 28A80; Secondary 28A75 28A78}
\thanks{The author would like to thank Pertti Mattila and Chun-Kai Tseng for  reading an early version of the manuscript. The author is supported by NSTC through grant 111-2115-M-002-010-MY5. 
}

\title[Hausdorff measure and self-similar sets]{On the Hausdorff measure of projections of self-similar sets}

\author{Chong-Wei Liang}
\address{Department of Mathematics, National Taiwan University, Taiwan.}
\email{d10221001@ntu.edu.tw}


\begin{abstract}
For $0<d\leq1/4$, let $\mathcal{C}(d)$ be the four-corner Cantor set with Hausdorff dimension $s_d=\log 4/\log(1/d)$. Peres, Simon, and Solomyak, as well as Mattila, asked for which $d$ 
the measure $\Hau^{s_d}(p_\theta(\mathcal{C}(d)))$ is positive for almost every direction $\theta$. It was open for the range $1/9\leq d\leq1/6$. In this paper, we show that, for $\delta< d\leq1/6$, there is a set $\IP(d)$ of positive Lebesgue measure such that for almost every $\theta\in\IP(d)$,  $\Hau^{s_d}(p_\theta(\mathcal{C}(d)))=0$, where $\delta=0.155124983896014\ldots$ is the unique zero in
$(1/9,1/6)$ of the polynomial
$P(d)=1-7d+3d^2+4d^3-2d^4-d^5$.
\end{abstract}
\maketitle

\section{Introduction and the main results}
For $0<d\leq 1/4$, let
\begin{equation}\label{eq:Kd}
  K_d=(1-d)\left\{\sum_{n=0}^{\infty}a_nd^n:
  a_n\in\{0,1\}\right\}\quad\text{and}\quad \mathcal{C}(d)=K_d\times K_d.
\end{equation}
 The set $\mathcal{C}(d)$ is the four-corner Cantor set obtained by copying each piece and scaling it by a factor of $d$, at the corners of the unit square.  Its
Hausdorff dimension is $  s_d={\log 4}/{\log(1/d)}$
and $0<\Hau^{s_d}(\mathcal{C}(d))<\infty$.

For $\theta\in[0,\pi)$, let $p_\theta$ be the orthogonal projection given by
\[p_\theta(x,y)=x\cos\theta+y\sin\theta.
\]
Marstrand's projection theory \cite{MR0063439} shows that $\HD p_\theta(\mathcal{C}(d))=s_d$ for almost all $\theta$. A natural question raised by Peres, Simon, and Solomyak \cite[Example 6.1]{MR1760599} (see also Mattila \cite[Problem~1, page.~39]{MR2044636}), asks for which
$d$ one has
\begin{equation}\label{eq:Mattila-question}
  \Hau^{s_d}(p_\theta(\mathcal{C}(d)))>0
  \quad\text{for Lebesgue almost every }\theta\in[0,\pi).
\end{equation}
When $d =1/4$, $\mathcal{C}(1/4)$ is an example of a purely unrectifiable 
set with finite and positive $\Hau^1$ measure. By a general theorem of Besicovitch \cite{MR0867284,MR1333890},  we have that $\Hau^1(p_\theta(C(1/4)))=0$ for almost all $\theta$.  Peres, Simon, and Solomyak
proved that (\ref{eq:Mattila-question}) fails for $1/6<d<1/4$ \cite[Theorem 1.2 and Example 6.1]{MR1760599}.
The positive conclusion was known for $d<1/9$ \cite[Section~1]{MR2044636}. 
The range $1/9\leq d\leq 1/6$ was left open.

We make partial progress on this question in this paper. We prove that there is $\delta\in(1/9,1/6)$ such that (\ref{eq:Mattila-question}) is false for all $\delta< d\leq1/6$.
The threshold is the number $\delta=0.1551249838960142617871334092\ldots$, which is
the unique zero in $(1/9,1/6)$ of the polynomial
\begin{equation}\label{eq:P}
  P(d)=1-7d+3d^2+4d^3-2d^4-d^5.
\end{equation}
The choice of the threshold $\delta$ depends on the choice of the covering intervals (see Lemma \ref{lem:parameter}).

Define the set
\begin{equation}\label{eq:Fd}
  F_d=K_d-K_d
  =(1-d)\left\{\sum_{n=0}^{\infty}\varepsilon_nd^n:
  \varepsilon_n\in\{-1,0,1\}\right\}.
\end{equation}
For a set $F\subset\R$, we write $ F/F$ to represent the set of quotient $x/y$, in which $x,y\in F$ with $y\neq0$. The new part of Theorem \ref{mainthm} is
$\delta<d\leq1/6$. 

\begin{thm}\label{mainthm}
For every $\delta<d<1/4$, the quotient set $F_d/F_d$ contains a
nondegenerate interval.  At $d=1/6$ one has
\begin{equation}\label{eq:explicit-sixth}
  [10/7,35/24]\subset F_{1/6}/F_{1/6}.
\end{equation}
\end{thm}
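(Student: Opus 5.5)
Since $x/y=t$ with $x,y\in F_d$ amounts to $x-ty=0$, i.e. $0\in F_d-tF_d$, I will show that for every $t$ in a nondegenerate interval $J$ the set $F_d - tF_d$ contains $0$. The tool is the Newhouse thickness or, more concretely, a self-similar covering (``interval-filling'') argument adapted to $F_d$. Write $F_d=(1-d)\Lambda_d$ with $\Lambda_d=\{\sum\varepsilon_n d^n\}$. Then $\Lambda_d=\bigcup_{\varepsilon\in\{-1,0,1\}}(\varepsilon+d\Lambda_d)$, and $\Lambda_d$ is the attractor of three maps with ratio $d$ contained in $I=[-\frac1{1-d},\frac1{1-d}]$.

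\emph{Step 1.} I consider the two-parameter family $G_t=\Lambda_d-t\Lambda_d$, which is the attractor of the nine maps $z\mapsto dz+\varepsilon-t\varepsilon'$ with $\varepsilon,\varepsilon'\in\{-1,0,1\}$. It has similarity dimension $\log 9/\log(1/d)$, which is at least $1$ when $d\geq 1/9$. This is why the interval $(1/9,\cdot)$ appears.

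\emph{Step 2.} I find a closed interval $A$, depending on $t$ (for instance $A=[-a,a]$ with a suitably chosen $a$, or $A=$ the convex hull $(1+|t|)I$), such that
\[
A\subset\bigcup_{\varepsilon,\varepsilon'}\bigl(dA+\varepsilon-t\varepsilon'\bigr).
\]
Iterating this covering in the standard way shows $A\subset G_t$, since every point of $A$ has a nested sequence of images converging to a point of $G_t$. Hence $0\in G_t$ whenever $0\in A$. Because $0=0-t\cdot0$ is trivially in $G_t$, I must be careful: this is the degenerate case $y=0$. So I actually need $0\in \Lambda_d - t(\Lambda_d\setminus\{0\})$, i.e. a representation with $\varepsilon'_n$ not all zero.

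To handle this, I fix the first digits. I take $y$ with leading digit $\varepsilon_0'=1$ and $x$ with $\varepsilon_0=1$, so that I need $1-t\in t\,d\Lambda_d - d\Lambda_d = -d\,G_t$ (up to signs). It then suffices that $A\supset[(t-1)/d\pm\ldots]$, i.e. that $(t-1)/d\in A$, which holds for $t$ in an interval near $1$ provided the covering $A\subset\bigcup(dA+\varepsilon-t\varepsilon')$ holds there. More flexibly, I can prefix finite words $(\varepsilon_0\ldots\varepsilon_k),(\varepsilon'_0\ldots\varepsilon'_k)$ to land the target inside $A$. For $d=1/6$ this is what produces the explicit interval $[10/7,35/24]$: I choose a concrete $A=[\alpha(t),\beta(t)]$ with endpoints linear in $t$, and verify the gap conditions between the nine translated copies. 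These are linear inequalities in $t$, whose solution set together with the target condition should be exactly $[10/7,35/24]$.

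\emph{Main obstacle.} The main obstacle is the covering condition for general $d\in(\delta,1/4)$. The nine copies $dA+\varepsilon-t\varepsilon'$ sit at the points $\varepsilon-t\varepsilon'$, i.e. $0,\pm1,\pm t,\pm(1-t),\pm(1+t)$. Consecutive gaps between these centres must be at most $d|A|$, while $A$ must be contained in the union. With $t$ near $3/2$ the sorted centres are spaced by $1/2$ or $t-1$, and solving the resulting inequalities for the optimal $A$ gives polynomial constraints in $d$. If the simple single-interval $A$ fails near $\delta$, I would instead use a union of two or three intervals $A=A_1\cup A_2\cup\cdots$ (the ``covering intervals'' mentioned before the theorem), each required to be covered by the images of the others. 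Eliminating $t$ and the endpoints then yields the degree-five condition $P(d)<0$. Verifying this elimination, and that a common $t$-interval survives, is the delicate computation. For $1/6<d<1/4$ the covering is easier because $d|A|$ grows, so the same configuration, or the result of Peres--Simon--Solomyak, covers that range.
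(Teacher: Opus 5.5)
Your reduction is correct and is in substance the paper's. Fixing the leading digits of $x$ and $y$ to be $1$ turns $t\in F_d/F_d$ into $(t-1)/d\in G_t=\Lambda_d-t\Lambda_d$. This is exactly the condition $r_0(d,t)\in R_t$, since $R_t=(1+t)+(1-d)G_t$, and your iteration argument is Lemma~\ref{lem:backward-cover}. The gap is that the proposal stops where the actual work begins, and the covering criterion you do state is too strong to reach $d\le 1/6$.

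You require all consecutive gaps among the nine centres to be at most $d|A|$. The gap between $t$ and $1+t$ is $1$, so $|A|\ge 1/d$. Since $A\subset G_t\subset\bigl[-\frac{1+t}{1-d},\frac{1+t}{1-d}\bigr]$, this forces $t\ge\frac{1-3d}{2d}$. But the target lies in that hull only if $t\le\frac{1}{1-2d}$. These two conditions leave a nondegenerate range only for $d>1/6$. At $d=1/6$ only $t=3/2$ survives, so your recipe cannot give $[10/7,35/24]$; it does handle $1/6<d<1/4$ with $A$ the full hull.

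The missing idea is to drop the extreme digits $\pm(1+t)$ from the self-cover and use the digit $1+t$ only in finitely many leading places. For $1<t<2$, the interval $A=\bigl[-\frac{t}{1-d},\frac{t}{1-d}\bigr]$ is covered by the seven copies $c+dA$, $c\in\{0,\pm1,\pm t,\pm(t-1)\}$, whenever $\ell(d)\le t\le u(d)$. At $d=1/6$ with one such leading digit, the target condition is $|5t-7|\le t/5$, i.e.\ $t\in[35/26,35/24]$. Intersecting with $[\ell,u]=[10/7,5/3]$ gives the stated interval.

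Even this fix cannot go below $d_0=(7-\sqrt{33})/8\approx 0.1569$ with a single symmetric interval. For $d\le 1/6$ the cover forces $t\ge\ell(d)$, the target forces $t\le 1/(1-2d)$, and $\frac{1}{1-2d}-\ell(d)$ has the sign of $-1+7d-4d^2$. For $\delta<d\le d_0$ you only say ``a union of two or three intervals'' and assert that elimination yields $P(d)<0$. No family is specified, and the threshold depends on that choice.

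The paper takes $J=[1,1+2t]$, which is your $A$ above after the affine change $R_t=(1+t)+(1-d)G_t$. It adds the images $\phi_{00}(J)$, $\phi_{10}\phi_{00}(J)$, $\phi_{22}(J)$, $\phi_{12}\phi_{22}(J)$, whose eleven copies cover $J$ for $L(d)\le t\le\min\{U(d),V(d)\}$, where $L(d)=\ell(d)-\frac{d^2(1-d)}{1+d}$. It then places $r_0(d,t)$ in $d^2\phi_{10}(d^mJ)$, which allows $t$ up to $B_\infty(d)=\frac{1-d^3+d^4}{1-2d}$ as $m\to\infty$. The identity $B_\infty(d)-L(d)=-\frac{P(d)}{(1-2d)(1+d)}$ is where $P$ and $\delta$ come from, and further case distinctions are needed at $\delta_1,\delta_2$. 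Without an explicit family and these inequality checks, the new range $\delta<d\le 1/6$, which is the content of the theorem, is not proved.
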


The connection between $F_d/F_d$ and the problem is based on theorem of Peres, Simon, and Solomyak \cite{MR1760599}. The existence of a nondegenerate  interval in this quotient
gives a positive-measure interval of directions on which $p_\theta$ is not
one-to-one on $\mathcal{C}(d)$.  The projection theorem of Peres, Simon, and Solomyak gives zero critical Hausdorff measure for almost every direction in
that interval.  We illustrate this reduction in Section \ref{sec:2}.

As a corollary, we show (\ref{eq:Mattila-question}) is false for all $\delta< d\leq1/6$.
\begin{cor}\label{Mattila}
For $\delta< d\leq1/6$, there is a subset of direction $\IP(d)\subset[0,\pi)$ of positive Lebesgue measure such that for almost every $\theta\in\IP(d)$,  $\Hau^{s_d}(p_\theta(\mathcal{C}(d)))=0$.
\end{cor}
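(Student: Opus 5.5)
We derive Corollary \ref{Mattila} from Theorem \ref{mainthm} together with the projection theorem of Peres, Simon, and Solomyak. The plan is to show that an interval in $F_d/F_d$ produces an interval of directions on which $p_\theta$ is not injective on $\mathcal{C}(d)$, and then to let their result turn non-injectivity into vanishing of the critical Hausdorff measure for almost every such direction.

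\textbf{Step 1: an interval of bad directions.} Fix $\delta<d\le 1/6$. By Theorem \ref{mainthm} there is a nondegenerate interval $J\subset F_d/F_d$. Shrinking $J$, we may assume it avoids $0$, so every $t\in J$ can be written $t=-u/v$, or $t=u/v$ up to sign, with $u,v\in F_d$ and $v\neq0$. The sign ambiguity is harmless because $F_d=-F_d$. Two points $(x_1,y_1),(x_2,y_2)\in\mathcal{C}(d)$ have the same image under $p_\theta$ exactly when
\[
(x_1-x_2)\cos\theta+(y_1-y_2)\sin\theta=0 .
\]
Here $x_1-x_2,\ y_1-y_2\in F_d$, and conversely every pair $(u,v)\in F_d\times F_d$ arises as such a pair of coordinate differences. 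So whenever $-\cot\theta$ or $-\tan\theta$ equals $v/u$ for some $u,v\in F_d$, not both zero, the map $p_\theta$ identifies two distinct points of $\mathcal{C}(d)$. Set
\[
\IP(d)=\{\theta\in(0,\pi): -\cot\theta\in J\}.
\]
Since $\cot$ is a diffeomorphism from $(0,\pi)$ onto $\R$, this set is a nondegenerate interval of directions, and in particular has positive Lebesgue measure. For every $\theta\in\IP(d)$, the restriction $p_\theta|_{\mathcal{C}(d)}$ is not injective.

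\textbf{Step 2: apply Peres--Simon--Solomyak.} Their theorem \cite[Theorem 1.2]{MR1760599} concerns self-similar sets $K$ in $\R^2$ of similarity dimension $s\le 1$ satisfying the strong separation condition. It states that for almost every $\theta$ in a set where the projected IFS fails to be injective, or has a coincidence $p_\theta\circ f_{\mathbf i}=p_\theta\circ f_{\mathbf j}$ on cylinders, one has $\Hau^{s}(p_\theta K)=0$. The set $\mathcal{C}(d)$ fits this framework: it is generated by the four homotheties of ratio $d$ toward the corners of the unit square, these satisfy strong separation for $d<1/2$, and $s_d\le1$ for $d\le1/4$. We therefore need to check that the hypothesis of their theorem is precisely non-injectivity of $p_\theta$ on $K$. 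Equivalently, the projected one-dimensional IFS $\{p_\theta\circ f_i\}$ should have an exact overlap, or fail the strong separation condition, in the form their statement requires. Step 1 supplies exactly this condition for every $\theta\in\IP(d)$, and the corollary follows.

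\textbf{Main obstacle.} The delicate point is matching the precise hypothesis of \cite{MR1760599}. Their theorem is phrased for the family of projected self-similar sets $\Lambda_\theta$: for a.e.\ parameter, $\Hau^{s}(\Lambda_\theta)>0$ holds if and only if the projected IFS satisfies strong separation, or equivalently $p_\theta$ is injective on $K$. We must make sure that the ``almost every'' in their statement is relative to Lebesgue measure in $\theta$ and can be localized to the interval $\IP(d)$. I expect this to be immediate from how their result is formulated, since it is a pointwise dichotomy valid off a Lebesgue null set of directions. Still, this is where care is needed. Injectivity failing at a single pair of points, as opposed to a failure of separation of first-level cylinders, must be shown to suffice. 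It does, because any coincidence of two points lies in two distinct cylinders at some finite level, and the self-similarity of $\mathcal{C}(d)$ then propagates the overlap.
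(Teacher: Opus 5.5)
Your proposal is correct and is essentially the paper's proof. Theorem \ref{mainthm} supplies an interval in $F_d/F_d$; your Step 1 plays the role of Proposition \ref{lem:quotient-to-IP}, using directions $\theta=\pi/2+\arctan t$ that annihilate $(y,x)$ rather than the paper's $\theta=\arctan t$ annihilating $(a,-b)$, and both are valid by the symmetries of $F_d\times F_d$; your Step 2 is Proposition \ref{prop:criterion}. Theorem \ref{thm:PSS}, as cited, is already phrased in terms of the non-injectivity set $\IP(\mathcal{C})$, and your directions form a subset of it, so the matching issue you flag as the ``main obstacle'' does not arise (your propagation argument is nonetheless correct).
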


\noindent{\it Organization.} The paper is organized as follows. In Section \ref{sec:2}, the review background material and the theorem of Peres, Simon, and Solomyak are included. Section \ref{sec:partI} and \ref{sec:partII} contribute to the verification of Theorem \ref{mainthm} and Corollary \ref{Mattila}.

\section{Preliminaries and Theorem of Peres, Simon, and Solomyak}\label{sec:2}
\subsection{Open set condition and self-similar sets in the plane}
We recall some terminology associated with the self-similar sets.
\begin{defn}[IFS]\label{def:ifs}
    Fix an interval $I\subset\R$.
    Define the {\it iterated function system} ({\it IFS} ) $\{S_i\}$ to be a finite family of contractive maps $S_i:I\rightarrow I$ with contraction factor $0<d_i<1$.
    The {\it attractor} of $\{S_i\}$ is the unique nonempty compact set $\mathcal{C}\subset I$ satisfying 
$$\mathcal{C}=\bigcup_{i}S_i(\mathcal{C}).$$
    We say an IFS $\{S_i\}$ has {\it open set condition} ({\it OSC}) if there is a nonempty open set $U$ such that $S_i(U)$ are disjoint and lie in $U$ for all $i$. 
\end{defn}
Consider a self-similar set $\mathcal{C}\subset\R^2$, defined as the unique
non-empty compact satisfying
\begin{align}\label{selfsimilar}
\mathcal{C}=\bigcup_i(d_i\mathcal{C}+b_i)\quad\text{with}\,\,d_i\in(0,1)\quad\text{and}\quad b_i\in\R^2.
\end{align}
 We \emph{always assume} that the similitudes $S_i(x):=d_ix+b_i$ for all $i$ satisfy the {\it OSC}. It is well-known \cite{MR0625600} that the Hausdorff dimension $\HD\mathcal{C}$ equals the similarity dimension $s$, defined by $\sum_i d^s_i=1$,
and the $s$-dimensional Hausdorff measure of $\mathcal{C}$ is positive and finite.

\subsection{Theorem of Peres, Simon, and Solomyak and its implication}
Let $\IP(\mathcal{C})$ be the set of directions $\theta\in[0,\pi)$ for which
$p_\theta$ is not one-to-one on $\mathcal{C}$ (the letters "$\IP$" stand for "intersection parameters").
We will use the following theorem stated in
\cite[Theorem~1.2]{MR1760599}.  Ero\u{g}lu proved the corresponding
statement for products of affine Cantor sets
\cite[Theorem 2.14]{MR2312386}.

\begin{thm}\label{thm:PSS}
Let $\mathcal{C}\subset\R^2$ be a self-similar set (\ref{selfsimilar}) with Hausdorff dimension $0<s<1$.  Suppose that $\mathcal{C}$ is not contained in a line.  Then
\[
  \Hau^s(p_\theta(\mathcal{C}))=0
  \quad\text{for almost every }\theta\in\IP(\mathcal{C}).
\]
\end{thm}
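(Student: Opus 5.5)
The plan is to reduce to the one-dimensional theory of self-similar sets. I would combine the Bandt--Graf/Schief characterization of positive Hausdorff measure with a Lebesgue density argument in the parameter $\theta$. As in (\ref{selfsimilar}) the maps $S_i(x)=d_ix+b_i$ are homotheties, so for every $\theta$ the projection $p_\theta(\mathcal{C})$ is the attractor of the IFS on $\R$
\[
  S_i^\theta(t)=d_it+p_\theta(b_i),
\]
with the same ratios $d_i$. Its similarity dimension is therefore the same $s<1$. By Schief's theorem, $\Hau^s(p_\theta(\mathcal{C}))>0$ holds if and only if $\{S_i^\theta\}$ satisfies the OSC. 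By Bandt--Graf, the OSC holds if and only if the identity is not an accumulation point of the relative maps $(S^\theta_u)^{-1}S^\theta_v$, where $u\neq v$ are finite words with comparable contraction ratios $d_u\asymp d_v$. It therefore suffices to show the following: for a.e.\ $\theta\in\IP(\mathcal{C})$ there are pairs $u\neq v$ with $d_u/d_v$ bounded above and below and with $(S^\theta_u)^{-1}S^\theta_v$ arbitrarily close to, but different from, the identity.

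\textbf{Step 1.} I first encode $\IP(\mathcal{C})$. The direction $\theta$ lies in $\IP(\mathcal{C})$ exactly when there are infinite words $\omega\neq\eta$ whose points $x=\pi(\omega)$ and $y=\pi(\eta)$ in $\mathcal{C}$ satisfy $p_\theta(x-y)=0$. Stripping the common prefix, we may assume $\omega_1\neq\eta_1$, so that $x-y$ is a difference of points from two distinct first-level pieces. Since $\mathcal{C}$ is not contained in a line, such differences realize a nontrivial family of directions. I would write
\[
  \IP(\mathcal{C})=\bigcup_{i\neq j}\{\theta:\ p_\theta(S_i\mathcal{C})\cap p_\theta(S_j\mathcal{C})\neq\emptyset\},
\]
and note that each set in this union is compact. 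By self-similarity, the same description holds inside every cylinder, at every scale.

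\textbf{Step 2.} This is the main step. Let $\theta_0$ be a Lebesgue density point of $\IP(\mathcal{C})$, and fix $\varepsilon>0$ and truncation levels $n,m$. For $u=\omega|_n$ and $v=\eta|_m$ with $d_u\asymp d_v$, the translation part of $(S^\theta_u)^{-1}S^\theta_v$ is
\[
  \tau_{u,v}(\theta)=d_u^{-1}\,p_\theta(S_v(0)-S_u(0)).
\]
At $\theta_0$ this equals $d_u^{-1}p_{\theta_0}(y-x)$ plus a bounded error coming from the tails; it need not be small. I would exploit the fact that $\theta\mapsto\tau_{u,v}(\theta)$ has derivative of order $|S_v(0)-S_u(0)|/d_u$. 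This is a transversality statement, valid because $x-y\neq0$ points out of the kernel of $p_\theta$. Consequently the set $\{\theta:|\tau_{u,v}(\theta)-\tau_{u',v'}(\theta)|<\varepsilon\}$ over suitable pairs $(u,v),(u',v')$ is a union of intervals of length about $\varepsilon d_u$, and their scaled copies near $\theta_0$ occupy a definite proportion of small neighbourhoods. Here two near-equal relative positions produce a near-identity map $(S^\theta_{u})^{-1}S^\theta_{v}(S^\theta_{v'})^{-1}S^\theta_{u'}$, or the equivalent composite words.

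Using density of $\IP(\mathcal{C})$ at $\theta_0$, together with compactness of the relative positions, I would conclude via a pigeonhole argument. At each scale $r$, the relative positions available near $\theta_0$ take more than $O(1/\varepsilon)$ values in a bounded set, because $s<1$ forces the number of comparable-size pairs to outgrow the available room. Hence two of them are $\varepsilon$-close on a set of $\theta$ of proportion bounded below in $(\theta_0-r,\theta_0+r)$. A Borel--Cantelli/density argument over $\varepsilon=1/k$ then shows that a.e.\ $\theta\in\IP(\mathcal{C})$ admits near-identity relative maps for every $k$.

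\textbf{Step 3.} I must also exclude the possibility that the relative map is exactly the identity. Exact coincidences $\tau_{u,v}(\theta)=\tau_{u',v'}(\theta)$ occur only for countably many $\theta$, since each is a nonconstant trigonometric equation, so this exceptional set is null. If instead identity coincidences occur, the weak separation fails in the exact-overlap sense, which already forces $\dim<s$ or $\Hau^s=0$. In either case, Bandt--Graf and Schief give $\Hau^s(p_\theta(\mathcal{C}))=0$ for a.e.\ $\theta\in\IP(\mathcal{C})$.

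I expect the quantitative counting in Step 2 to be the main obstacle: turning a single coincidence at $\theta_0$ into many near-coincidences at a positive proportion of nearby angles. That is where both the density-point argument and the hypothesis $s<1$ are used.
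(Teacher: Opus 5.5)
The paper does not prove this statement; it quotes it from Peres--Simon--Solomyak (their Theorem~1.2), so your argument has to stand on its own. Your framework is right: the projected IFS has the same ratios, Schief turns $\Hau^s(p_\theta(\mathcal C))>0$ into the OSC, and Bandt--Graf turns failure of the OSC into $\inf_{u\neq v}\|(S^\theta_u)^{-1}S^\theta_v-\mathrm{id}\|=0$. But Step~2, which you rightly flag as the crux, fails for two separate reasons.

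(i) The counting runs the wrong way. At scale $r$ there are about $r^{-s}$ pieces, and for $s<1$ this is far fewer than the $r^{-1}$ disjoint slots of length $r$ in a bounded interval. So dimension forces no crowding, and a global count of pairs does not give any single piece many neighbours.

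(ii) More seriously, two nearly equal relative positions $\tau_{u,v}(\theta)\approx\tau_{u',v'}(\theta)$ only make $(S^\theta_u)^{-1}S^\theta_v(S^\theta_{v'})^{-1}S^\theta_{u'}$ close to the identity. That map is not of the form $(S^\theta_a)^{-1}S^\theta_b$; there are no ``equivalent composite words'' in general, and Bandt--Graf says nothing about such products. Recurrence of neighbour maps is in fact compatible with the OSC. Take $t\mapsto(t+a)/4$, $a\in\{0,1,3\}$: it has OSC with $(0,1)$, dimension $\log 3/\log 4<1$, and touching pieces. Every level-$n$ relative map is an integer translation, and translation by $1$ occurs for infinitely many pairs ($u=w0$, $v=w1$). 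Only when $u=u'$ would you get $(S^\theta_v)^{-1}S^\theta_{v'}\approx\mathrm{id}$, and that needs one piece with $\gtrsim 1/\varepsilon$ neighbours, which nothing in your sketch supplies. Step~3 is unnecessary: exact overlaps already make the Bandt--Graf infimum zero.

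What closes the gap is the transversality you mention, applied to the single pair of pieces carrying the coincidence, with no pigeonhole. Let $\theta\in\IP$, $x=\pi(\omega)\neq y=\pi(\eta)$, $p_\theta(x)=p_\theta(y)$, $\omega_1\neq\eta_1$. Keep $x\neq y$ explicit: your union formula in Step~1 is wrong when first-level pieces touch in the plane. In the equal-ratio case (e.g.\ $\mathcal C(d)$), put $u=\omega|_n$, $v=\eta|_n$. Then $(S^{\theta'}_u)^{-1}S^{\theta'}_v$ is translation by $g_n(\theta')=p_{\theta'}(b_v-b_u)/d_u$. Here $|g_n(\theta)|\le\mathrm{diam}\,\mathcal C$, and for large $n$, $|g_n'|\approx|x-y|/d_u$ on a window of radius $Cd_u/|x-y|$ about $\theta$. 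So $g_n$ vanishes in that window, and $|g_n|<\varepsilon$ on a subinterval of length $\gtrsim\varepsilon d_u/|x-y|$.

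Let $E_\varepsilon$ be the open set of directions admitting $u\neq v$ with $\|(S^{\theta'}_u)^{-1}S^{\theta'}_v-\mathrm{id}\|<\varepsilon$. The above shows $E_\varepsilon$ fills a proportion $\ge c\varepsilon$ of arbitrarily small windows centred at every point of $\IP$; the dependence on $|x-y|$ cancels. Hence $\IP\setminus E_\varepsilon$ has no Lebesgue density points and is null. Apply the density theorem to this set, not to $\IP$ as you do. Intersecting over $\varepsilon=1/k$ and invoking Bandt--Graf and Schief finishes the proof; no counting, and no use of $s<1$, is needed.

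For unequal ratios, first pass to subwords $uw_1$, $vw_2$, of length bounded in terms of $\varepsilon$, whose ratios agree up to a factor $1\pm\varepsilon$. Rotating within the window moves the two projected pieces relative to each other by an amount of order $d_u$, so the same argument aligns these subpieces.
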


We will apply this theorem to the four-corner Cantor set $\mathcal{C}(d)=K_d\times K_d$, where $0<d<1/4$. The main observation is the  Proposition \ref{prop:criterion}.\\ Let $\IP(d):=\IP(\mathcal{C}(d))$. Then $\theta\in\IP(d)$ if
there is a nonzero vector $z\in \mathcal{C}(d)-\mathcal{C}(d)=F_d\times F_d$ such that
$p_\theta(z)=0$.

\begin{prop}\label{lem:quotient-to-IP}
Let $I\subset(0,\infty)$ be a nondegenerate interval. If the interval $I$ is contained in $F_d/F_d$,
then $\arctan I\subset\IP(d)$.  In particular, $\IP(d)$ has positive
Lebesgue measure.
\end{prop}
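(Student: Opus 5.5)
The proof is a direct unwinding of the definitions, using the characterization stated just before the proposition: $\theta\in\IP(d)$ as soon as there is a nonzero $z\in\mathcal{C}(d)-\mathcal{C}(d)=F_d\times F_d$ with $p_\theta(z)=0$. Given $t\in I$, I will produce such a $z$ for $\theta=\arctan t$. The measure statement then follows from the monotonicity of $\arctan$.

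Fix $t\in I$. Since $I\subset F_d/F_d$, we can write $t=x/y$ with $x,y\in F_d$ and $y\neq0$. The set $F_d$ is symmetric, $F_d=-F_d$, because replacing every digit $\varepsilon_n$ by $-\varepsilon_n$ in (\ref{eq:Fd}) maps $F_d$ onto itself. Hence $z:=(-x,y)$ lies in $F_d\times F_d$, and $z\neq0$ because $y\neq0$.

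Set $\theta=\arctan t$. Since $t>0$, we have $\theta\in(0,\pi/2)$, so $\cos\theta>0$ and $\tan\theta=t$. Then
\[
p_\theta(z)=-x\cos\theta+y\sin\theta=\cos\theta\,(-x+y\tan\theta)=\cos\theta\,(-x+x)=0.
\]
Concretely, writing $z=u-v$ with $u,v\in\mathcal{C}(d)$ gives $u\neq v$ and $p_\theta(u)=p_\theta(v)$. So $p_\theta$ is not one-to-one on $\mathcal{C}(d)$, that is, $\theta\in\IP(d)$. As $t\in I$ was arbitrary, $\arctan I\subset\IP(d)$.

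Finally, $\arctan$ is continuous and strictly increasing on $(0,\infty)$. It therefore maps the nondegenerate interval $I$ onto a nondegenerate subinterval of $(0,\pi/2)\subset[0,\pi)$, which has positive Lebesgue measure. No step here is a real obstacle. The only points needing care are the symmetry of $F_d$, which makes the sign choice in $z$ legitimate, and the requirement $y\neq0$, which ensures $z\neq0$.
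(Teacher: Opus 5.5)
Your proof is correct and follows essentially the same route as the paper: write $t$ as a quotient of elements of $F_d$, exhibit a nonzero vector of $F_d\times F_d=\mathcal{C}(d)-\mathcal{C}(d)$ in the kernel of $p_{\arctan t}$, and use the monotonicity of $\arctan$ for the measure claim. The only cosmetic difference is that the paper takes the vector $(a,-b)$ with $b>0$ and realizes it explicitly as the difference of the points $(x_1,y_2)$ and $(x_2,y_1)$ of $\mathcal{C}(d)$, whereas you invoke the symmetry $F_d=-F_d$ to form $(-x,y)$.
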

\begin{proof}
    Let $t\in I$. There exists $a,b\in F_d$ with $b>0$ such that $t=a/b=(x_1-x_2)/(y_1-y_2)$ for some $x_1,x_2, y_1,y_2\in K_d$. The difference of the two points $(x_1,y_2), (x_2,y_1)$ is $(a,-b)\in F_d\times F_d$. If $\theta=\arctan t$, then $p_\theta(a,-b)=0$ and hence $\arctan I\subset\IP(d)$. The map $t\mapsto\arctan t$ is a $C^1$
diffeomorphism on $I$, so $\IP(d)$ has positive Lebesgue measure.
\end{proof}
 
Our criterion for verifying $ \Hau^{s_d}(p_\theta(\mathcal{C}(d)))=0$, combining with Theorem \ref{thm:PSS}, is to show $F_d/F_d$ has a nonempty interior.

\begin{prop}\label{prop:criterion}
Let $0<d< 1/4$. If $F_d/F_d$ contains a nondegenerate interval, then
there is a set of direction $\IP(d)\subset[0,\pi)$ of positive Lebesgue measure such that for almost every $\theta\in\IP(d)$,  $\Hau^{s_d}(p_\theta(\mathcal{C}(d)))=0$.
\end{prop}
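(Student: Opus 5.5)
The plan is to combine Proposition \ref{lem:quotient-to-IP} with Theorem \ref{thm:PSS}. We take the $\IP(d)$ of the statement to be the set of intersection parameters $\IP(\mathcal{C}(d))$ defined above. Let $I\subset F_d/F_d$ be a nondegenerate interval.

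The first step is to arrange that $I\subset(0,\infty)$, as Proposition \ref{lem:quotient-to-IP} requires. Note that $F_d=-F_d$, since $\varepsilon_n\mapsto-\varepsilon_n$ preserves $\{-1,0,1\}$. Hence $F_d/F_d$ is symmetric under $t\mapsto -t$. Shrinking $I$ if needed, we may assume $0\notin I$. Replacing $I$ by $-I$ if needed, we then get $I\subset(0,\infty)$.

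The second step applies Proposition \ref{lem:quotient-to-IP}. It gives $\arctan I\subset\IP(d)$, so $\IP(d)$ has positive Lebesgue measure.

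The third step checks the hypotheses of Theorem \ref{thm:PSS} for $\mathcal{C}=\mathcal{C}(d)$. The set $\mathcal{C}(d)$ is self-similar with the four similitudes $x\mapsto dx+(1-d)b$, where $b\in\{0,1\}^2$. These satisfy the OSC with $U=(0,1)^2$, because for $d<1/2$ the images are disjoint open corner squares inside $U$. By the result recalled above, the dimension is $s_d=\log4/\log(1/d)$, and $d<1/4$ gives $0<s_d<1$. Finally, $\mathcal{C}(d)$ contains the three points $(0,0)$, $(1,0)$ and $(0,1)$, which are not collinear, so $\mathcal{C}(d)$ is not contained in a line.

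Theorem \ref{thm:PSS} then gives $\Hau^{s_d}(p_\theta(\mathcal{C}(d)))=0$ for almost every $\theta\in\IP(d)$, which is the claim. No step here is a real obstacle. The only points needing care are the sign and zero normalization of $I$ in the first step and confirming $s_d<1$ in the third step.
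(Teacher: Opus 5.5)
Your proposal is correct and follows the paper's proof: you verify the OSC with $U=(0,1)^2$ and $0<s_d<1$, use Proposition~\ref{lem:quotient-to-IP} to get positive measure of $\IP(d)$, and then apply Theorem~\ref{thm:PSS}. Your normalization of $I$ into $(0,\infty)$ via $F_d=-F_d$, and your non-collinearity check using $(0,0),(1,0),(0,1)\in\mathcal{C}(d)$, supply details the paper leaves implicit.
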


\begin{proof}
The four-corner Cantor set $\mathcal{C}(d)\subset\R^2$ is the attractor of the IFS 
\begin{align*}
\left\{dx+(1-d)\varepsilon:\,\varepsilon\in\{0,1\}^2\right\},
\end{align*}satisfies the {\it OSC} with the open set $U=(0,1)^2$ and has similarity
dimension $s_d\in(0,1)$.  By Proposition \ref{lem:quotient-to-IP}, $\IP(d)$ has
positive Lebesgue measure.  Theorem \ref{thm:PSS} gives
$\Hau^{s_d}(p_\theta(\mathcal{C}(d)))=0$ for almost every $\theta\in\IP(d)$.
\end{proof}

\section{Proof of Theorem \ref{mainthm}--part \rm{I}}\label{sec:partI}

It is convenient to translate $F_d$ to a Cantor-like set with digits $0,1,2$. Define
\begin{equation}\label{eq:Gamma}
  \Gamma_d=1+F_d
  =(1-d)\left\{\sum_{n=0}^{\infty}\varepsilon_nd^n:
  \varepsilon_n\in\{0,1,2\}\right\}
\end{equation}
and, for $t>1$, let 
\begin{equation}\label{eq:R_t}
    R_t=\Gamma_d+t\Gamma_d=(1-d)\left\{\sum_{n=0}^{\infty}(i_n+tj_n)d^n:
  i_n,j_n\in\{0,1,2\}\right\}.
\end{equation} 
The set $\Gamma_d$ possesses the symmetry that $\Gamma_d=2-\Gamma_d$ and the fractal set $R_t$ is the attractor of the nine similarities
\begin{equation}\label{eq:phi}
  \phi_{ij}(x)=dx+(1-d)(i+tj),
  \qquad i,j\in\{0,1,2\}.
\end{equation}

We first record a backward covering observation.

\begin{lem}\label{lem:backward-cover}
Let $\mathcal K\subset\R$ be nonempty, compact, and bounded.  If $\mathcal K\subset
  \bigcup_{i,j=0}^2\phi_{ij}(\mathcal K)$, then
$\mathcal K\subset R_t$.
\end{lem}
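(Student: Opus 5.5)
The plan is a standard backward-iteration argument that produces, for each point of $\mathcal K$, an infinite address, and then evaluates the corresponding series. Fix $x_0\in\mathcal K$. Since $\mathcal K\subset\bigcup_{i,j}\phi_{ij}(\mathcal K)$, there are digits $(i_0,j_0)\in\{0,1,2\}^2$ and a point $x_1\in\mathcal K$ with $x_0=\phi_{i_0j_0}(x_1)$. Applying the same hypothesis to $x_1$, and then inductively, yields sequences $(i_n,j_n)$ and points $x_n\in\mathcal K$ such that $x_n=\phi_{i_nj_n}(x_{n+1})$ for every $n\ge 0$.

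Next I unwind the recursion. Writing $c_n=(1-d)(i_n+tj_n)$, induction on $N$ gives
\[
x_0=\sum_{n=0}^{N-1}c_nd^n+d^Nx_N .
\]
Because $\mathcal K$ is bounded, say $\mathcal K\subset[-M,M]$, the error term satisfies $|d^Nx_N|\le Md^N\to0$. The partial sums therefore converge to $x_0$, and
\[
x_0=(1-d)\sum_{n=0}^\infty (i_n+tj_n)d^n\in R_t
\]
by the series description (\ref{eq:R_t}) of $R_t$. Since $x_0\in\mathcal K$ was arbitrary, $\mathcal K\subset R_t$.

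An alternative route uses Hausdorff distance. Let $\Phi(A)=\bigcup\phi_{ij}(A)$, which is a contraction on compact sets whose fixed point is $R_t$. The hypothesis $\mathcal K\subset\Phi(\mathcal K)$ iterates to give $\mathcal K\subset\Phi^n(\mathcal K)$. Since $\Phi^n(\mathcal K)\to R_t$ in the Hausdorff metric and $R_t$ is closed, this again gives $\mathcal K\subset R_t$.

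There is no real obstacle here. The only point needing care is the choice of preimages at each step, which uses countable dependent choice, or can be avoided by the Hausdorff-metric argument. Boundedness of $\mathcal K$ is exactly what makes the tail term $d^N x_N$ vanish. Compactness is not needed in the first argument, only boundedness.
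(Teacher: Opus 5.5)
Your proof is correct and is essentially the paper's argument: choose preimages in $\mathcal K$ recursively, unwind to $x_0=(1-d)\sum_{n<N}(i_n+tj_n)d^n+d^Nx_N$, and use boundedness of $\mathcal K$ to make the tail vanish. Your side remarks are also correct: this argument needs only boundedness, not compactness, and the Hausdorff-metric route via $\mathcal K\subset\Phi^n(\mathcal K)\to R_t$ gives the same conclusion.
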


\begin{proof}
Given $x_0\in\mathcal K$, choose recursively $x_n\in\mathcal K$ and
$i_n,j_n\in\{0,1,2\}$ such that
$x_{n-1}=\phi_{i_nj_n}(x_n)$.  Then
\[
  x_0=(1-d)\sum_{k=1}^n d^{k-1}(i_k+t j_k)+d^n x_n.
\]
The desired result follows from the assumption that $\mathcal{K}$ is compact and bounded and the limiting argument.
\end{proof}

\subsection{Finding the interval in the fractal set $R_t$}
We will apply Lemma \ref{lem:backward-cover} to find an interval $J$ depending on $t$ such that $J\subset R_t$.
Consider the  intervals
\begin{equation}\label{eq:findingintervals}
\begin{split}
  J&=[1,1+2t],\quad A=\phi_{00}(J),\quad B=\phi_{10}(A),\quad  D=\phi_{22}(J)\quad\text{and}\quad G=\phi_{12}(D).
\end{split}
\end{equation}
The first observation is that, under suitable assumption on $t$, the eleven intervals
\begin{equation}\label{eq:eleven-intervals}
\begin{split}
 &\phi_{10}(J),\ \phi_{01}(J),\ \phi_{20}(B),\ \phi_{01}(G),
 \ \phi_{20}(J),\ \phi_{11}(J),\\
 &\phi_{02}(J),\ \phi_{21}(B),\ \phi_{02}(G),\ \phi_{21}(J),
 \ \phi_{12}(J).
\end{split}
\end{equation}  will cover $J$. Here and below the intervals are listed from left to right.

\begin{lem}\label{lem:findingcover}
Define
\begin{equation}\label{eq:LUV}
 L(d)=\frac{2-2d-d^2+d^3}{1+d},\quad U(d)=\frac{2(1-d-d^2+d^3)}{1+d-4d^3}\quad\text{and}\quad V(d)=\frac{1-d}{1-3d}.
\end{equation}
If $ L(d)\leq t\leq\min\{U(d),V(d)\}$, then 
 $J$ is covered by the intervals in \eqref{eq:eleven-intervals}. Besides,
\[
  J\cup A\cup B\cup D\cup G\subset R_t.
\]
\end{lem}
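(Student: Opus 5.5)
The plan is to prove the two assertions in order: first the covering statement, by explicit endpoint bookkeeping, and then the inclusion in $R_t$, by a direct application of Lemma \ref{lem:backward-cover}.

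\emph{Step 1: endpoints.} Since $\phi_{ij}(x)=dx+(1-d)(i+tj)$ is increasing, every interval in \eqref{eq:findingintervals} and \eqref{eq:eleven-intervals} is a closed interval, and I will write down its endpoints. For instance,
\[
\phi_{ij}(J)=\bigl[d+(1-d)(i+tj),\ d(1+2t)+(1-d)(i+tj)\bigr],\qquad A=[d,\,d+2dt],
\]
and then
\[
B=\bigl[d^2+1-d,\ d^2+2d^2t+1-d\bigr],\qquad D=\phi_{22}(J)=\bigl[d+2(1-d)(1+t),\ d(1+2t)+2(1-d)(1+t)\bigr].
\]
The interval $G=\phi_{12}(D)$ is obtained similarly. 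From these, $\phi_{20}(B)$, $\phi_{21}(B)$, $\phi_{01}(G)$ and $\phi_{02}(G)$ are explicit intervals whose endpoints are affine in $t$ with coefficients polynomial in $d$.

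The two extreme intervals already match $J$ exactly:
\begin{itemize}
\item the left endpoint of $\phi_{10}(J)$ is $d+(1-d)=1$;
\item the right endpoint of $\phi_{12}(J)$ is $d(1+2t)+(1-d)(1+2t)=1+2t$.
\end{itemize}
So each of the eleven intervals lies in $\R$ with these extreme endpoints, and $J$ is covered exactly when no gaps occur. Concretely, for each of the ten consecutive pairs in \eqref{eq:eleven-intervals}, I need
\[
\text{right endpoint of the $k$-th interval}\ \ge\ \text{left endpoint of the $(k+1)$-th interval}.
\]
I will also check that the listed order is consistent, i.e.\ the left endpoints increase.

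\emph{Step 2: the gap inequalities.} Each of the ten inequalities is linear in $t$, of the form $\alpha(d)t\ge\beta(d)$ with polynomial $\alpha,\beta$. Solving each for $t$ gives a lower or upper bound on $t$ by a rational function of $d$. I expect three of these bounds to be binding, one each from the pieces involving $B$, $G$ and the plain images of $J$. After clearing denominators, these should reduce to exactly $t\ge L(d)$, $t\le U(d)$ and $t\le V(d)$. For $V$, note that $1-3d>0$ when $d<1/4$.

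The remaining seven inequalities I will verify to be automatic for $0<d<1/4$, either outright or as consequences of $L\le t\le \min\{U,V\}$. For example, gaps between $\phi_{i j}(J)$ and $\phi_{i'j'}(J)$ reduce to comparisons like $d(1+2t)\ge (1-d)(\cdot)$, which follow from the bound on $t$ against $V$. This step is the main obstacle: the bookkeeping of ten affine inequalities, and making sure the non-binding ones are genuinely implied on the whole range, will be handled by elementary sign checks of the resulting polynomials in $d$ on $(0,1/4)$.

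\emph{Step 3: inclusion in $R_t$.} Let $\mathcal K=J\cup A\cup B\cup D\cup G$, a nonempty compact bounded set. By Step 2, $J$ is covered by images of $J$, $B$ and $G$ under maps $\phi_{ij}$, hence $J\subset\bigcup_{i,j}\phi_{ij}(\mathcal K)$. The remaining pieces are single images by their definitions:
\[
A=\phi_{00}(J),\qquad B=\phi_{10}(A),\qquad D=\phi_{22}(J),\qquad G=\phi_{12}(D),
\]
and each of $J,A,D$ lies in $\mathcal K$, so each of $A,B,D,G$ is contained in $\bigcup_{i,j}\phi_{ij}(\mathcal K)$. Thus $\mathcal K\subset\bigcup_{i,j=0}^2\phi_{ij}(\mathcal K)$, and Lemma \ref{lem:backward-cover} yields $J\cup A\cup B\cup D\cup G\subset R_t$.
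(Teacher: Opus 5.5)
Your proposal is correct and follows essentially the same route as the paper. Like the paper, you compute the endpoints, check that the extreme endpoints are exactly $1$ and $1+2t$, reduce the consecutive-overlap conditions to $t\le V(d)$, $t\ge L(d)$, $t\le U(d)$, and then apply Lemma~\ref{lem:backward-cover} to $\mathcal K=J\cup A\cup B\cup D\cup G$. Two small simplifications: the seven ``remaining'' gap inequalities need no separate sign checks, since each literally coincides with one of those three (e.g.\ the gap between $\phi_{01}(G)$ and $\phi_{20}(J)$ is again $t(1+d)\ge 2-2d-d^2+d^3$), and the ordering check is unnecessary, because ``right endpoint of each interval $\ge$ left endpoint of the next'' together with the matching extreme endpoints already forces the union to contain $[1,1+2t]$.
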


\begin{proof}
Remark that two consecutive intervals overlap if the right endpoint of the former interval is larger than the left endpoint of the lateral interval. Hence, by brutal force, each pair of consecutive intervals in (\ref{eq:eleven-intervals}) overlaps if 
\begin{align*}
    & 1-d-t+3dt\geq0,\\
 &(1+d)t-(2-2d-d^2+d^3)\geq0,\\
&2-2d-2d^2+2d^3-(1+d-4d^3)t\geq 0.
\end{align*}
These three conditions are equivalent to
$t\leq V(d)$, $t\geq L(d)$, and $t\leq U(d)$, respectively.  This proves
$J$ is covered by the intervals in \eqref{eq:eleven-intervals} when $ L(d)\leq t\leq\min\{U(d),V(d)\}$.

Let $\mathcal{K}:= J\cup A\cup B\cup D\cup G$. Then by the covering property of $J$ and Lemma \ref{lem:backward-cover} gives that $J\cup A\cup B\cup D\cup G\subset R_t$.
\end{proof}

\subsection{Finding the interval in $F_d/F_d$}
For an integer $m\geq1$, define
\begin{equation}\label{eq:automatic-states}
  Z_m=\phi_{10}(d^mJ)=1-d+d^{m+1}J
\end{equation}
and, for $1\leq k\leq m$, consider the intervals
\begin{align}\label{added-interval}
  d^kJ=\phi_{00}(d^{k-1}J),\quad Z_m,\quad dZ_m=\phi_{00}(Z_m),\quad\text{and}\quad d^2Z_m=\phi_{00}(dZ_m).
\end{align}
Consequently, if $ L(d)\leq t\leq\min\{U(d),V(d)\}$, then every
interval in (\ref{added-interval}) is contained in $R_t$ by Lemma \ref{lem:findingcover}.

The point that will produce a quotient is
\begin{equation}\label{eq:r0}
  r_0(d,t)=\frac{1-(1-2d)t}{d}.
\end{equation}

\begin{lem}\label{lem:rangefort-1}
For $m\geq1$, define
\begin{align}\label{eq:AmBm}
 A_m(d)=\frac{1-d^3+d^4-d^{m+4}}{1-2d+2d^{m+4}}\quad\text{and}\quad
 B_m(d)&=\frac{1-d^3+d^4-d^{m+4}}{1-2d}.
\end{align}
Then $ r_0(d,t)\in d^2Z_m$ if and only if $A_m(d)\leq t\leq B_m(d).$
Moreover,
\begin{equation}\label{eq:AmBm-limit}
  A_m(d)<B_m(d),\quad\text{and}\quad
  A_m(d),B_m(d)\nearrow
  B_\infty(d)=\frac{1-d^3+d^4}{1-2d}.
\end{equation}
\end{lem}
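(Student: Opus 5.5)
The plan is to write out the interval $d^2Z_m$ explicitly and reduce the membership $r_0(d,t)\in d^2Z_m$ to two linear inequalities in $t$. By \eqref{eq:automatic-states}, $Z_m=1-d+d^{m+1}J$ with $J=[1,1+2t]$. Hence
\[
d^2Z_m=\bigl[\,d^2-d^3+d^{m+3},\ d^2-d^3+d^{m+3}(1+2t)\,\bigr].
\]
Since $d>0$, the condition $r_0(d,t)\in d^2Z_m$ is equivalent, after multiplying by $d$, to
\[
d^3-d^4+d^{m+4}\ \le\ 1-(1-2d)t\ \le\ d^3-d^4+d^{m+4}+2t\,d^{m+4}.
\]

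Each inequality is then solved for $t$. Throughout, $1-2d>0$ because $d<1/4$, and $N_m:=1-d^3+d^4-d^{m+4}>0$ because $d^3<1$. The left inequality rearranges to $(1-2d)t\le N_m$, that is, $t\le B_m(d)$. The right inequality rearranges to $t\,(1-2d+2d^{m+4})\ge N_m$. Its coefficient is positive, so this is exactly $t\ge A_m(d)$. This proves the equivalence.

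For \eqref{eq:AmBm-limit}, note that $A_m$ and $B_m$ share the positive numerator $N_m$. The denominator of $A_m$ is strictly larger than $1-2d$, so $A_m<B_m$.

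For monotonicity in $m$, observe that $d^{m+4}$ strictly decreases to $0$. Consequently $N_m$ strictly increases to $1-d^3+d^4$. It follows that $B_m=N_m/(1-2d)$ increases to $B_\infty(d)$. For $A_m$, the numerator $N_m$ increases while the denominator $1-2d+2d^{m+4}$ decreases to $1-2d$. So $A_m$ also increases, with limit $(1-d^3+d^4)/(1-2d)=B_\infty(d)$.

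There is no real obstacle here. The only points needing care are the sign checks on $1-2d$, on $N_m$, and on the coefficient $1-2d+2d^{m+4}$, since these justify the direction of every division.
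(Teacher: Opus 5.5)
Your proposal is correct and takes the same route as the paper. You write down the endpoints $d^2(1-d+d^{m+1})$ and $d^2(1-d+d^{m+1}(1+2t))$ of $d^2Z_m$, multiply through by $d$, and solve the two resulting linear inequalities in $t$; you also make explicit the sign checks and the monotonicity argument that the paper leaves as a ``straightforward computation.''
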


\begin{proof}
The left and right endpoints of $d^2Z_m$ are
\[
  d^2(1-d+d^{m+1})
  \quad\text{and}\quad
  d^2(1-d+d^{m+1}(1+2t)),
\]
respectively. The remaining assertions follows from (\ref{eq:r0}) and the straightforward computation.
\end{proof}

\begin{lem}\label{lem:target-to-quotient}
If $r_0(d,t)\in R_t$, then $t\in F_d/F_d$.
\end{lem}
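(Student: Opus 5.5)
Suppose $r_0=r_0(d,t)\in R_t$. Then there are digits $i_n,j_n\in\{0,1,2\}$ with
\[
r_0=(1-d)\sum_{n\ge0}(i_n+tj_n)d^n=x+ty,\qquad x,y\in\Gamma_d .
\]
The idea is to prepend one more digit pair so that the resulting element of $R_t$ splits into two elements of $\Gamma_d$ whose $F_d$-parts are in ratio $t$ (up to sign). From \eqref{eq:r0} we get $dr_0=1-(1-2d)t$, so
\[
dr_0+(1-d)\cdot 0+t\bigl((1-d)\cdot 2\bigr)-t=1-(1-2d)t+2(1-d)t-t=1 .
\]
So I will use the similarity $\phi_{02}(r_0)=dr_0+2(1-d)t$. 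This equals $1+t$, and it lies in $R_t$ because $R_t$ is invariant under the $\phi_{ij}$.

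Next I write $\phi_{02}(r_0)=x'+ty'$ with $x'=dx\in\Gamma_d$ and $y'=2(1-d)+dy\in\Gamma_d$. Prepending a digit keeps us in $\Gamma_d$, since $\Gamma_d$ is the attractor of $u\mapsto du+(1-d)\varepsilon$ with $\varepsilon\in\{0,1,2\}$. Thus $x'+ty'=1+t$, that is, $(x'-1)+t(y'-1)=0$. Set $a=x'-1$ and $b=y'-1$. Both lie in $\Gamma_d-1=F_d$, and $a=-tb$.

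It remains to get the ratio right and rule out degeneracy. If $b\neq0$, then $t=-a/b=(-a)/b$. Since $F_d=-F_d$ (the digit set $\{-1,0,1\}$ is symmetric), $-a\in F_d$, and so $t\in F_d/F_d$. The main obstacle is excluding $b=0$. If $b=0$ then $a=0$ as well, which only says $x'=y'=1$ and gives no contradiction by itself. So I would check directly that $y'=1$ is impossible. We have $y'=2(1-d)+dy\ge 2(1-d)$, and $2(1-d)>1$ because $d<1/2$. Hence $y'>1$, so $b>0$ and $t=(-a)/b\in F_d/F_d$.

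If the digit-prepending step causes trouble, an equivalent route is to check the identity $1+t=\phi_{02}(r_0)$ algebraically and invoke $\phi_{02}(R_t)\subset R_t$ directly. Then I would decompose $1+t=X+tY$ with $X,Y\in\Gamma_d$ and $Y\ge 2(1-d)$, where the lower bound on $Y$ comes from its leading digit being $2$.
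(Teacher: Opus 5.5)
Your proof is correct and is essentially the paper's argument: the identity $\phi_{02}(r_0)=1+t$ rearranges to the paper's $1-dx=t(1-2d+dy)$, which gives the same quotient $t=(1-dx)/(1-2d+dy)$ with the same positivity of the denominator. Where the paper uses $\Gamma_d=2-\Gamma_d$ to place both numerator and denominator in $(1-d)+dF_d\subset F_d$, you use $F_d=-F_d$; the difference is cosmetic.
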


\begin{proof}
Write $r_0=x+ty$ with $x,y\in\Gamma_d$.  Equation \eqref{eq:r0} gives $ 1-dx=t(1-2d+dy)$.
Remark that $1-dx=1-2d+d(2-x)$ and $1-2d+dy$
belong to
 $$1-2d+d\Gamma_d=1-2d+d(2-\Gamma_d)=(1-d)+dF_d\subset F_d.$$ 
Since $1-2d+d\Gamma_d\subset(0,\infty)$ for all $d<1/2$, then $t\in F_d/F_d$ as desired.
\end{proof}

\subsubsection{Finding the parameters--routine computations}
We now check the parameter ranges.  Define $\delta_1=0.1551428439174714607\ldots$ to be the unique zero in $(1/9,1/6)$ of
\begin{equation}\label{eq:q1}
  q_1(d)=d^5+d^4-d^3+3d^2+6d-1,
\end{equation}
and $\delta_2=0.1552404121503166368\ldots$ to be the unique zero there of
\begin{equation}\label{eq:q2}
  q_2(d)=d^3-3d^2-6d+1.
\end{equation}
The derivatives show that $q_1$ is increasing and $q_2$ is decreasing on
$[1/9,1/6]$.

Let $d_0=(7-\sqrt{33})/8$ be one of the zeros of the polynomial $-1+7d-4d^2.$
  Direct substitution at finite decimals gives
\begin{equation}\label{eq:threshold-order}
\begin{split}
 0.1551249&<\delta<0.1551250<0.1551428<\delta_1<0.1551429\\
 &<0.1552404<\delta_2<0.1552405<d_0<0.15693.
\end{split}
\end{equation}

We need the following elementary inequalities.

\begin{lem}\label{lem:parameter}
For $\delta\leq d\leq d_0$, $ L(d)<U(d)<V(d)$.
Besides, the following statements hold:
\begin{enumerate}
\item If $\delta<d\leq\delta_1$, then
      $L(d)<B_\infty(d)<U(d)$.
\item If $\delta_1<d\leq\delta_2$, then
      $B_1(d)>L(d)$ and $A_1(d)<U(d)$.
\item If $\delta_2<d\leq d_0$, set
\begin{equation}\label{eq:CD}
  {C}(d)=\frac{1-d^3}{1-2d+2d^3}
  \quad\text{and}\quad D(d)=\frac{1-d^3}{1-2d}.
\end{equation}
Then ${C}(d)<L(d)<D(d)$.
\end{enumerate}
\end{lem}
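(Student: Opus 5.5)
Each inequality in Lemma \ref{lem:parameter} compares two rational functions of $d$. The plan is to clear denominators, which are all positive on $[1/9,1/6]$, and reduce each comparison to the sign of one explicit polynomial. That sign is then controlled on the relevant subinterval using the root locations in \eqref{eq:threshold-order} together with monotonicity.

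\textbf{Positivity of denominators.} First I check that $1+d$, $1-3d$, $1-2d$, $1+d-4d^3$, $1-2d+2d^{m+4}$ and $1-2d+2d^3$ are all positive for $d\le 1/6$. This is immediate, since $4d^3<d$ there. Hence every inequality $f/g<h/k$ is equivalent to the polynomial inequality $fk<hg$.

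\textbf{The chain $L<U<V$.} For $U<V$, the inequality is $2(1-d-d^2+d^3)(1-3d)<(1-d)(1+d-4d^3)$. I expect the difference to factor through $1-d$, leaving a low-degree polynomial. Its root in the range should be $d_0$, the zero of $-1+7d-4d^2$; this is presumably why $d_0$ is the upper endpoint. For $L<U$, I would expand the difference and show it is positive on $[\delta,d_0]$. One way is to bound the terms of degree $\ge 3$ crudely by their values at $d=1/6$, leaving a quadratic whose sign is clear.

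\textbf{Items (1)--(3).} For (1), cross-multiplying $L<B_\infty$ gives
\[
(1-d^3+d^4)(1+d)-(2-2d-d^2+d^3)(1-2d)=-P(d).
\]
Since $P(1/9)>0$ and $\delta$ is the unique zero of $P$ in $(1/9,1/6)$, we get $L<B_\infty$ exactly when $d>\delta$. Similarly, I expect $B_\infty<U$ to reduce to a multiple of $-q_1(d)$ with a positive factor, hence to $d\le\delta_1$, using that $q_1$ is increasing. For (2), I would write out $B_1>L$ and $A_1<U$ with $m=1$ (so $d^{m+4}=d^5$). I expect one of them to reduce to $q_1(d)\ge0$, i.e.\ $d\ge\delta_1$, and the other to a condition such as $q_2(d)\ge 0$, i.e.\ $d\le\delta_2$, using that $q_2$ is decreasing. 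Otherwise, the other should be a polynomial that is comfortably positive on all of $[1/9,1/6]$. For (3), the inequalities $C<L$ and $L<D$ reduce to polynomial inequalities. I expect one of them to be equivalent to $q_2(d)<0$, i.e.\ $d>\delta_2$. The other should hold on the whole range by a crude estimate.

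\textbf{Main obstacle.} The main difficulty is purely bookkeeping. After expansion, each polynomial must be identified, possibly up to a positive factor such as $1+d$ or $1-d$, with exactly $P$, $q_1$ or $q_2$. Any leftover polynomial must have a definite sign on the stated subinterval. For those leftovers, I would rely on sign checks at the decimal endpoints listed in \eqref{eq:threshold-order} and on monotonicity obtained from the derivative. Because the subintervals are extremely short, endpoint evaluation plus a derivative bound suffices.
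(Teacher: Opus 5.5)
Your plan is essentially the paper's proof: clear the positive denominators, recognize the numerators $-P$, $(1-d)q_1$ and $(d-1)q_2$ for $L<B_\infty$, $B_1>L$ and $L<D$ exactly as you predict, and settle every other comparison by monotonicity plus one endpoint evaluation (for $L<B_\infty$ you also need $P$ decreasing, or $P(1/6)<0$; $P(1/9)>0$ and uniqueness of the zero do not by themselves give $P<0$ beyond $\delta$). Several of your other guesses are off, however: $V-U$ has numerator $(1-d)(-1+7d+2d^2-10d^3)$, whose relevant root is near $0.141$ rather than $d_0$ ($d_0$ comes from $b_\infty-\ell$ in Section~\ref{sec:partII}); and $U-B_\infty$, $U-A_1$, $L-C$ have numerators of degree $7,6,6$ unrelated to $q_1,q_2$, which are decreasing and change sign only near $0.1553$, $0.1553$ and $0.1575$, so they are not positive on all of $[1/9,1/6]$ and it is precisely your fallback of evaluating at the upper decimal bounds in \eqref{eq:threshold-order} that carries those steps.
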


\begin{proof}
All denominators below are positive.  Direct subtraction gives that
\begin{align}
 U(d)-L(d)&=\frac{d^2\cdot(-1+8d-7d^2-4d^3+4d^4)}{(1+d)(1+d-4d^3)},\label{eq:UL}\\
 V(d)-U(d)&=\frac{-1+8d-5d^2-12d^3+10d^4}{(1-3d)(1+d-4d^3)}.\label{eq:VU}
\end{align}
Let $c(d)=-1+8d-7d^2-4d^3+4d^4$ and $e(d)=-1+8d-5d^2-12d^3+10d^4$.
Since $\delta>3/20$ and $ c(3/20),e(3/20)>0$
and $c',e'>16/3$ on $[3/20,1/6]$, equations
\eqref{eq:UL}--\eqref{eq:VU} prove that $ L(d)<U(d)<V(d)$.

Next, since $P(d)$ is decreasing on $[1/9,1/6]$ and
\begin{equation}\label{eq:Binf-minus-L}
 B_\infty(d)-L(d)=-\frac{P(d)}{(1-2d)(1+d)}.
\end{equation}
Thus, $B_\infty(d)>L(d)$ when $d>\delta$.  Furthermore,
\begin{equation}\label{eq:U-minus-Binf}
 U(d)-B_\infty(d)=
 \frac{h(d)}{(1+d-4d^3)(1-2d)},
\end{equation}
where
\[
 h(d)=1-7d+2d^2+11d^3-4d^4-d^5-4d^6+4d^7.
\]
The polynomial $h$ is decreasing on $[0,1/6]$, and
$h(621/4000)>0$.  Since $\delta_1<621/4000$, this proves part (1).

For $m=1$, subtraction gives
\begin{equation}\label{eq:B1-minus-L}
 B_1(d)-L(d)=\frac{(1-d)q_1(d)}{(1-2d)(1+d)}\quad\text{and}\quad  U(d)-A_1(d)=
 \frac{h_1(d)}{(1+d-4d^3)(1-2d+2d^5)},
\end{equation}
where
\[
 h_1(d)=1-7d+2d^2+11d^3-4d^4+4d^5-7d^6.
\]
The polynomial $h_1$ is decreasing on $[0,1/6]$, and
$h_1(621/4000)>0$.  Since $\delta_2<621/4000$, therefore part (2) follows.

For part (3), similarly, we have
\begin{equation}\label{eq:D-minus-L}
 D(d)-L(d)=\frac{(d-1)q_2(d)}{(1-2d)(1+d)}\quad\text{and}\quad L(d)-C(d)=\frac{f(d)}{(1+d)(1-2d+2d^3)}
\end{equation}
where
\[
 f(d)=1-7d+3d^2+8d^3-5d^4-2d^5+2d^6.
\]
The polynomial $f$ is decreasing on $[0,1/6]$, and
$f(157/1000)>0$.  Since $d_0<157/1000$, part (3) follows and the proof is complete.
\end{proof}

\begin{prop}\label{prop:delta-to-d0}
If $\delta<d\leq d_0$, then $F_d/F_d$ contains a nondegenerate interval.
\end{prop}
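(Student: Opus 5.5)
The plan is to produce, for each $d\in(\delta,d_0]$, a nondegenerate interval $I$ of parameters $t$ with two properties. First, $I\subset[L(d),\min\{U(d),V(d)\}]$, so that Lemma \ref{lem:findingcover} applies and $J\subset R_t$. Second, for every $t\in I$ the point $r_0(d,t)$ lies in a small interval that is already known to be inside $R_t$. Lemma \ref{lem:target-to-quotient} then gives $t\in F_d/F_d$ for all $t\in I$, so $I\subset F_d/F_d$. By Lemma \ref{lem:parameter} we have $L<U<V$ on $[\delta,d_0]$, hence $\min\{U,V\}=U$ and the admissible window is $[L(d),U(d)]$. I would split into the three ranges of Lemma \ref{lem:parameter}.

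\emph{Case $\delta<d\le\delta_1$.} By Lemma \ref{lem:parameter}(1), $L(d)<B_\infty(d)<U(d)$. By \eqref{eq:AmBm-limit}, $A_m(d)<B_m(d)$ and both increase to $B_\infty(d)$. Since $(L,U)$ is an open neighbourhood of $B_\infty$, for $m$ large both $A_m$ and $B_m$ lie in $(L,U)$. Take $I=[A_m(d),B_m(d)]$, which is nondegenerate. For $t\in I$ we have $t\in[L,U]$, so every interval in \eqref{added-interval} lies in $R_t$; in particular $d^2Z_m\subset R_t$. Lemma \ref{lem:rangefort-1} gives $r_0(d,t)\in d^2Z_m$, hence $r_0\in R_t$.

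\emph{Case $\delta_1<d\le\delta_2$.} Take $I=[A_1,B_1]\cap[L,U]=[\max\{A_1,L\},\min\{B_1,U\}]$. This is nondegenerate because $A_1<B_1$, $L<U$, $B_1>L$ and $A_1<U$, the last two by Lemma \ref{lem:parameter}(2). The same argument as in the first case, with $m=1$, gives $r_0\in d^2Z_1\subset R_t$.

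\emph{Case $\delta_2<d\le d_0$.} Here I would use $d^2J=\phi_{00}(\phi_{00}(J))$ instead of $d^2Z_m$; it lies in $R_t$ because $J\subset R_t$ and $\phi_{00}(R_t)\subset R_t$. A direct computation gives the equivalences:
\[
r_0(d,t)\le d^2(1+2t)\iff t\ge C(d),\qquad r_0(d,t)\ge d^2\iff t\le D(d).
\]
Hence $r_0\in d^2J$ exactly when $C(d)\le t\le D(d)$. By Lemma \ref{lem:parameter}(3), $C<L<D$, so $I=[L,\min\{D,U\}]$ is nondegenerate and contained in both $[C,D]$ and $[L,U]$. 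For $t\in I$ this gives $r_0\in d^2J\subset R_t$.

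The only delicate point is nondegeneracy of $I$ in each case, and Lemma \ref{lem:parameter} is designed to supply exactly those inequalities. The remaining work is the routine check of the endpoints of $d^2J$ in the third case, which are worked out above.
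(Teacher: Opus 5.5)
Your proposal is correct and follows the paper's proof essentially step for step. It uses the same three-case split and the same target intervals $d^2Z_m$ (for large $m$), $d^2Z_1$ and $d^2J$, with nondegeneracy coming from Lemma~\ref{lem:parameter} and the conclusion from Lemmas~\ref{lem:findingcover}, \ref{lem:rangefort-1} and~\ref{lem:target-to-quotient}; the only differences are cosmetic (taking $I=[A_m,B_m]\subset(L,U)$, dropping $V$ because $U<V$, and writing out the endpoint check for $C\le t\le D$, which the paper calls a direct calculation).
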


\begin{proof}
Suppose that $\delta<d\leq\delta_1$.  By
Lemma \ref{lem:parameter}, $ L(d)<B_\infty(d)<U(d)<V(d).$ Choose a sufficiently large finite $m$ in \eqref{eq:AmBm-limit}.  Then $$ \max\{L,A_m\}<\min\{U,B_m\}.$$
For each $\max\{L,A_m\}<t<\min\{U,B_m\}$, we have $A_m(d)\leq t\leq B_m(d)$, which implies that, from Lemma \ref{lem:rangefort-1}, 
$r_0(d,t)\in R_t$.  Lemma \ref{lem:target-to-quotient} gives
$t\in F_d/F_d$ for $\delta<d\leq\delta_1$.

Assume $\delta_1<d\leq\delta_2$, using $m=1$, we obtain, from Lemma \ref{lem:parameter}, 
$B_1>L$ and $A_1<U<V$.  Thus
\[
  \max\{L,A_1\}<\min\{U,B_1\}.
\]
The same argument gives an interval in $F_d/F_d$. 

It remains to consider $\delta_2<d\leq d_0$.  A direct calculation shows that $ r_0(d,t)\in d^2J$ if and only if $C(d)\leq t\leq D(d)$,
with $C,D$ as in \eqref{eq:CD}.  Lemma \ref{lem:parameter} gives
\[
  C<L<\min\{D,U,V\}.
\]
Hence, every $t$ in the nondegenerate interval
$\bigl(L,\min\{D,U,V\}\bigr)$ satisfies $ L(d)\leq t\leq\min\{U(d),V(d)\}$, and therefore $J\subset R_t$. Since
$d^2J\subset R_t$, Lemma \ref{lem:target-to-quotient} applies and the proof is complete.
\end{proof}

\section{Proof of Theorem \ref{mainthm}--part \rm{II}}\label{sec:partII}

The rest of the parameter range has a shorter description.  Define
\begin{equation}\label{eq:Gd}
  G_d=\left\{1+\sum_{n=1}^{\infty}\varepsilon_nd^n:
  \varepsilon_n\in\{-1,0,1\}\right\}.
\end{equation}
The set $(1-d)G_d$ is the subset of $F_d$ whose first digit is $1$.
Consequently,
\begin{equation}\label{eq:G-quotient}
  G_d/G_d\subset F_d/F_d.
\end{equation}

Fix $1<t<2$.  The equation $t\in G_d/G_d$ is equivalent to
\begin{equation}\label{eq:ratio-expansion}
  t-1=\sum_{n=1}^{\infty}(a_n-tb_n)d^n,
  \quad a_n,b_n\in\{-1,0,1\}.
\end{equation}
We first use only the seven digits
\begin{equation}\label{eq:seven-digits}
  \mathcal B_t=\{-t,-1,1-t,0,t-1,1,t\}.
\end{equation}

\begin{lem}\label{lem:seven-digit-interval}
Let
\begin{equation}\label{eq:ell-u}
  \ell(d)=\frac{2(1-d)}{1+d},
  \quad\text{and}\quad u(d)=\frac{1-d}{1-3d}.
\end{equation}
If $1<t<2$ and $\ell(d)\leq t\leq u(d)$, then every point of
\begin{equation}\label{eq:Jt}
  J_t=\left[-\frac{dt}{1-d},\frac{dt}{1-d}\right]
\end{equation}
has an expansion $z=\sum_{n=1}^{\infty}c_nd^n$, where $c_n\in\mathcal B_t$.
\end{lem}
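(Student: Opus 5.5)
The plan is to show that $J_t$ is covered by its own images under the seven contractions
\[
  \psi_c(z)=d(c+z),\qquad c\in\mathcal B_t,
\]
and then to run the backward-iteration argument of Lemma~\ref{lem:backward-cover}. The expansion $z=\sum_{n\ge1}c_nd^n$ can be rewritten as $z=d c_1+d z'$ with $z'=\sum_{n\ge1}c_{n+1}d^n$. So it suffices to prove
\[
  J_t\subset\bigcup_{c\in\mathcal B_t}\psi_c(J_t).
\]
Granting this, for $z_0\in J_t$ we recursively pick $c_n\in\mathcal B_t$ and $z_n\in J_t$ with $z_{n-1}=d(c_n+z_n)$. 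This gives
\[
  z_0=\sum_{k=1}^n c_kd^k+d^nz_n .
\]
Since $|z_n|\le dt/(1-d)$, the remainder $d^nz_n$ tends to $0$, and the expansion converges to $z_0$.

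For the covering, write $h=dt/(1-d)$ for the half-length of $J_t$. The set $\psi_c(J_t)$ is the interval centred at $dc$ with half-length $dh$. For $1<t<2$ the digits in $\mathcal B_t$ are already listed in increasing order, $-t<-1<1-t<0<t-1<1<t$.

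First, the outer endpoints match exactly. The rightmost image has right endpoint $dt+dh=dt\bigl(1+\tfrac{d}{1-d}\bigr)=\tfrac{dt}{1-d}=h$. By the symmetry $\mathcal B_t=-\mathcal B_t$, the leftmost image has left endpoint $-h$.

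Second, consecutive images must overlap. This holds exactly when each gap between consecutive digits is at most $2h$. The gaps alternate between two values.
\begin{itemize}
\item The gaps $(-t,-1)$, $(1-t,0)$, $(0,t-1)$ and $(1,t)$ all have length $t-1$. The condition $t-1\le 2dt/(1-d)$ rearranges to $t(1-3d)\le 1-d$, i.e.\ $t\le u(d)$.
\item The gaps $(-1,1-t)$ and $(t-1,1)$ have length $2-t$. The condition $2-t\le 2dt/(1-d)$ rearranges to $2(1-d)\le t(1+d)$, i.e.\ $t\ge \ell(d)$.
\end{itemize}
Both conditions are therefore exactly the hypotheses $\ell(d)\le t\le u(d)$, and together with the matching endpoints they give the covering.

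There is no serious obstacle. The only points needing care are the ordering of the digits for $1<t<2$ and the exact endpoint identity $dt+dh=h$. The limiting step is immediate because $J_t$ is bounded.
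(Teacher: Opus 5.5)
Your proof is correct and follows the same route as the paper: establish the self-covering $J_t\subset\bigcup_{c\in\mathcal B_t}d(c+J_t)$ through overlap of consecutive images, then iterate backward as in Lemma~\ref{lem:backward-cover}. You also supply the computations the paper leaves implicit, namely the endpoint identity $dt+dh=h$ and the two gap conditions that reduce exactly to $t\le u(d)$ and $t\ge\ell(d)$; one small correction is that the gap sequence is $t-1,\,2-t,\,t-1,\,t-1,\,2-t,\,t-1$, which does not strictly alternate.
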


\begin{proof}
    It suffices to show that \begin{align}\label{coverforJt}
J_t\subset\bigcup_{c\in  \mathcal B_t}d\left (c+J_t\right).
    \end{align}
   Any consecutive intervals on the right hand side of (\ref{coverforJt}) overlaps if the right endpoint of the former interval is larger than the left endpoint of the later interval. The assumptions $1<t<2$ and $\ell(d)\leq t\leq u(d)$ guarantee the overlapping property of the consecutive intervals. 
\end{proof}

We now allow the omitted extreme digit $1+t$ for finitely many places.
For $m\geq1$, let
\begin{equation}\label{eq:ambm}
  a_m(d)=\frac{1-d^{m+1}}{1-2d+2d^{m+1}},
  \quad
  b_m(d)=\frac{1-d^{m+1}}{1-2d}\quad\text{and}\quad b_\infty(d)=\frac{1}{1-2d}.
\end{equation}
The number $b_\infty(d)$ is the limit of both $a_m(d)$ and $b_m(d)$.

\begin{lem}\label{lem:moreseven}
Suppose that $\ell(d)\leq t\leq u(d)$ and
$a_m(d)\leq t\leq b_m(d)$ for some $m\geq1$.  Then
$t\in G_d/G_d$.
\end{lem}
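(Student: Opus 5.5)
The plan is to produce the expansion \eqref{eq:ratio-expansion} explicitly. I will use the omitted extreme digit $1+t$ in the first $m$ places and then hand the remainder to Lemma \ref{lem:seven-digit-interval}. Note that $1+t=a-tb$ with $a=1$, $b=-1$. Every element of $\mathcal B_t$ also has the form $a-tb$ with $a,b\in\{-1,0,1\}$: for instance $-t=0-t\cdot1$, $1-t=1-t\cdot 1$, and $t-1=-1-t\cdot(-1)$. So any expansion $t-1=\sum_{n\ge1}c_nd^n$ with $c_n\in\mathcal B_t\cup\{1+t\}$ gives $t\in G_d/G_d$. Throughout I use the standing assumption $1<t<2$ of this section, which Lemma \ref{lem:seven-digit-interval} requires.

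First I set $c_1=\dots=c_m=1+t$ and define the remainder
\[
r=t-1-(1+t)\sum_{n=1}^m d^n=t-1-\frac{(1+t)d(1-d^m)}{1-d}.
\]
I want $r=d^m z$ with $z\in J_t$, i.e.\ $|r|\le d^{m+1}t/(1-d)$. Then Lemma \ref{lem:seven-digit-interval}, which applies because $\ell(d)\le t\le u(d)$, gives $z=\sum_{n\ge1}c'_nd^n$ with $c'_n\in\mathcal B_t$. Setting $c_{m+n}=c'_n$ completes the expansion.

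The only real step is checking that the two-sided bound on $r$ is exactly $a_m(d)\le t\le b_m(d)$. I multiply the inequality $r\le d^{m+1}t/(1-d)$ by $1-d>0$ and collect the terms in $t$:
\[
t(1-2d)\le 1-d^{m+1},
\]
which is $t\le b_m(d)$. Similarly, $r\ge -d^{m+1}t/(1-d)$ becomes
\[
t(1-2d+2d^{m+1})\ge 1-d^{m+1},
\]
which is $t\ge a_m(d)$. Both denominators are positive since $d<1/4$. This algebra is routine, so I expect no genuine obstacle. The only care needed is confirming that the digit $1+t$ and all seven digits of $\mathcal B_t$ are admissible pairs $(a_n,b_n)$, and that the hypotheses of Lemma \ref{lem:seven-digit-interval} are met.
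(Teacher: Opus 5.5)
Your proposal is correct and follows the paper's proof. You place the extra digit $1+t$ in the first $m$ positions, require the remainder to lie in $d^mJ_t$ so that Lemma \ref{lem:seven-digit-interval} completes the expansion, and check that this containment is exactly $a_m(d)\le t\le b_m(d)$; your explicit check of the digit pairs $(a_n,b_n)$ and of the algebra supplies details the paper leaves to the reader.
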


\begin{proof}
We need to establish \eqref{eq:ratio-expansion}. By Lemma \ref{lem:seven-digit-interval}, if we take the digit $1+t$ in the first $m$
places, then the remaining digits can be
chosen from $\mathcal B_t$ if and only if this number lies in $d^mJ_t$, which is equivalent to
\begin{align*}
     \left|t-1-(1+t)\frac{d(1-d^m)}{1-d}\right|
  \leq\frac{d^{m+1}t}{1-d}.
\end{align*}
Multiplication by $1-d$ and rearrangement give 
$a_m(d)\leq t\leq b_m(d)$.  As a result, we have
$$
 t-1=\sum^m_{n=1}(1+t)d^n+\sum_{n=m+1}^{\infty}(a_n-tb_n)d^n,
$$
where $a_n,b_n\in\{-1,0,1\}$, and hence $t\in G_d/G_d$.
\end{proof}

\begin{prop}\label{prop:d0-to-quarter}
If $d_0<d<1/4$, then $F_d/F_d$ contains a nondegenerate interval. In particular, if $d=1/6$, then $[10/7,35/24]\subset F_d/F_d$.
\end{prop}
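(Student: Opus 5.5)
The plan is to apply Lemma \ref{lem:moreseven} directly and then use the inclusion \eqref{eq:G-quotient}, $G_d/G_d\subset F_d/F_d$. So it suffices to find, for each $d_0<d<1/4$, an $m\ge 1$ for which the set of $t$ satisfying
\[
1<t<2,\qquad \ell(d)\le t\le u(d),\qquad a_m(d)\le t\le b_m(d)
\]
is a nondegenerate interval. The guiding idea is that $a_m(d)$ and $b_m(d)$ both converge to $b_\infty(d)=1/(1-2d)$. It is therefore enough to check that $b_\infty(d)$ lies strictly inside the window $(\max\{1,\ell(d)\},\min\{2,u(d)\})$.

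The key comparisons will be elementary algebra.
\begin{itemize}
\item First, $b_\infty(d)>\ell(d)$. This is equivalent to $1+d>2(1-d)(1-2d)$, that is, to $4d^2-7d+1<0$. That inequality holds exactly for $d$ between the two roots of $4d^2-7d+1$, the smaller of which is $d_0$. This explains the threshold $d_0$ in the statement.
\item Second, $b_\infty(d)<u(d)$. This follows from $(1-d)(1-2d)=1-3d+2d^2>1-3d$.
\item Third, $b_\infty(d)<2$, because $d<1/4$.
\item Finally, $a_m(d)>1$ for every $m$, since this reduces to $2d>3d^{m+1}$.
\end{itemize}
I will also record that $a_m<b_m<b_\infty$, which is immediate because the numerators agree and the denominators satisfy $1-2d+2d^{m+1}>1-2d$. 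Since $b_m\nearrow b_\infty>\ell$, I can choose $m$ large enough that $b_m(d)>\ell(d)$. Then every $t$ in the nondegenerate interval $\bigl(\max\{\ell,a_m\},\,b_m\bigr)$ satisfies $1<t<2$ (because $t>a_m>1$ and $t<b_m<b_\infty<2$) together with $\ell\le t\le u$ and $a_m\le t\le b_m$. Lemma \ref{lem:moreseven} then gives $t\in G_d/G_d\subset F_d/F_d$.

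For the explicit case $d=1/6$, I will take $m=1$ and compute the numbers directly:
\[
\ell=\tfrac{10}{7},\quad u=\tfrac53,\quad a_1=\frac{35/36}{26/36}=\tfrac{35}{26},\quad b_1=\frac{35/36}{2/3}=\tfrac{35}{24}.
\]
Since $35/26<10/7<35/24<5/3<2$, the interval $[10/7,35/24]$ satisfies every hypothesis of Lemma \ref{lem:moreseven}, so it lies in $F_{1/6}/F_{1/6}$.

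The main point requiring care is the identification of $d_0$ as the exact threshold where $b_\infty$ overtakes $\ell$. The rest consists of routine sign checks on simple rational functions and a limit argument in $m$.
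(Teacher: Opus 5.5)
Your proposal is correct and follows the paper's proof essentially step for step: you use the same comparisons $\ell<b_\infty<\min\{u,2\}$, with $d_0$ appearing as the smaller root of $4d^2-7d+1$, the same limit $a_m,b_m\nearrow b_\infty$, the same appeal to Lemma \ref{lem:moreseven} together with \eqref{eq:G-quotient}, and the same explicit $m=1$ computation at $d=1/6$. The only cosmetic difference is that you secure $t>1$ through $a_m>1$, whereas the paper uses $\ell(d)>1$; either works.
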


\begin{proof}
A direct computation shows that $$
 b_\infty(d)-\ell(d)
 =\frac{-1+7d-4d^2}{(1-2d)(1+d)}>0,\quad\forall d_0<d<1/4.
$$
Besides, $u(d)-b_\infty(d)>0$ and 
$1<b_\infty<2$ for $0<d<1/4$.  Thus, if $d_0<d<1/4$, then
\[
  1<\ell(d)<b_\infty(d)<\min\{u(d),2\}.
\]
Since $a_m,b_m\nearrow b_\infty$, then, for all sufficiently large $m$, the intersection
\[[\ell(d),u(d)]\cap[a_m(d),b_m(d)]\cap(1,2)
\]
has nonempty interior.  Lemma \ref{lem:moreseven}, together with
\eqref{eq:G-quotient}, reveal that $F_d/F_d$ contains a nondegenerate interval. In particular, if $d=1/6$ and $m=1$, then
\[
 \ell=\frac{10}{7},\quad u=\frac53,
 \quad a_1=\frac{35}{26},\quad \text{and}\quad b_1=\frac{35}{24}.
\]
Their intersection is $[10/7,35/24]$, which implies that $[10/7,35/24]\subset F_d/F_d$.
\end{proof}

\subsection{Proof of Theorem \ref{mainthm} and Corollary \ref{Mattila}}
\begin{proof}
Propositions \ref{prop:delta-to-d0} and \ref{prop:d0-to-quarter} cover
$\delta<d<1/4$.  The explicit assertion is contained in Proposition
\ref{prop:d0-to-quarter}. This complete the proof of Theorem \ref{mainthm}. Corollary \ref{Mattila} can be obtained by combining Theorem \ref{mainthm} and Proposition \ref{prop:criterion}.
\end{proof}

\bigskip
\noindent{\bf AI Disclosure:}
Generative AI tools were used for language editing, proofreading and numerical computing. The choice of the parameter $r_0(d,t)$ is also suggested by AI.  All other mathematical results, arguments, proofs, and conclusions were developed, verified, and approved by the authors.

\end{document}